\newtheorem{theo}{Theorem}
\newtheorem*{prop}{Proposition}
\newtheorem{lem}{Lemma}
\theoremstyle{remark}
\email{marc.prevost (a) univ-littoral.fr}
\keywords{Stieltjes constants, Riemann zeta function, Hurwitz zeta function, Laurent expansion, Stirling numbers of the first kind}
\subjclass[2020]{Primary 11Y60, 41A21; Secondary 41A60}
\begin{document}
	\title[]{{Expansion of generalized Stieltjes constants in terms of derivatives of  Hurwitz  zeta-functions.%
	} }
	\author[]{M. Pr\'evost}
	\address{M. Pr\'evost,\newline
Univ. Littoral Côte d’Opale, UR 2597-LMPA-Laboratoire de Mathématiques Pures et Appliquées Joseph Liouville, F-62100, Calais, France
\newline
CNRS, FR2037, Calais, France}
	 
	\date{\today }
	
	\begin{abstract}
	Generalized	Stieltjes constants $\gamma_n(a)$ are the coefficients in the Laurent series
		for the Hurwitz-zeta function $\zeta(s,a)$ at the pole $s=1$.
		Many authors proved formulas for these constants. In this paper, using a recurrence     between $(\zeta(s+j,a))_{j}$ and  proved by the author, we prove a general result which  contains some of these formulas as particular cases.
		
	\end{abstract}

	\maketitle
	
	\section{Introduction}

	For $a\in \mathbb{C}$, $\Re(a)>0,$ the Hurwitz zeta function is defined as 
	\begin{equation*}
		\zeta (\sigma,a)=\sum_{k=0}^{\infty }\frac{1}{(k+a)^{\sigma}},	\quad\Re (\sigma)>1.
	\end{equation*}
 It can be analytically continued to $\sigma\in \mathbb{C}\setminus
	\{1\}$, with a pole at $\sigma=1$. The generalized Stieltjes constants $\gamma
	_n(a)$ occur as coefficients in the Laurent series expansion of $\zeta (\sigma,a)$
	at the pole $\sigma=1$: 
	\begin{equation}\label{Laurentzeta}
		\zeta (\sigma,a)=\frac{1}{\sigma-1}+\sum_{\ell=0}^{\infty }\frac{(-1)^{\ell}}{\ell!}
		\gamma _{\ell}(a)(\sigma-1)^{\ell}.
	\end{equation}

For $a=1$, $\zeta(\sigma,a)$ is the usual zeta-function $\zeta(\sigma)$.

	There are numerous representations for them, for instance \cite{Berndt-RMJ,wilton},
	\begin{equation*}
		\gamma _{\ell}(a)=\lim_{m\rightarrow \infty }\left\{ \sum_{k=0}^{m}\frac{\ln
			^{\ell}(k+a)}{k+a}-\frac{\ln ^{\ell+1}(m+a)}{\ell+1}\right\} ,\quad
		\ell=0,1,2,\ldots ,a\neq 0,-1,-2,\ldots
	\end{equation*}
	If $a=1$, the generalized Stieltjes constant $\gamma _{\ell}(a)$ is the
	usual Stieltjes constant $\gamma _{\ell}$.

	The series for Hurwitz zeta function converges absolutely for $\Re(\sigma)=\alpha>1$ and the convergence is uniform in the half plane $\alpha\geq \alpha_0>1$. So $\zeta(\sigma,a)$
 is analytic in the half-plane 	$\Re(\sigma)=\alpha>1$. 
	
Numerical approximations of $\gamma_\ell(a)$ have been recently given in \cite{adell2017,blagouchine2019,prevostrivoal2023}.
In the last reference, Padé approximation of the remainder term of a series giving Stieltjes constant provides a new approximation of this constant.

The goal of  the current paper is to write new series for $\gamma_\ell(a)$ whose terms are the derivatives of the zeta function    at integers. These results will generalize the formula for Euler constant proved in \cite{prevost-elsner}.

	In \cite{reczeta}, using the Padé approximation $[n,m]_{e^{x}} =\frac{\sum_{j=0}^n \frac{(-n)_j}{(-m-n)_j}\frac{x^j}{j!}}{\sum_{j=0}^m \frac{(-m)_j}{(-m-n)_j}\frac{(-x)^j}{j!}}$ of the exponential function in the  following integral representation of $\zeta(\sigma,a)$,
	
	\begin{equation}\label{intrepreszeta}
		\zeta(\sigma,a)=\frac{1}{\Gamma(\sigma)}\int_0^\infty \frac{x^{\sigma-1}e^{-ax}}{1-e^{-x}}dx  
		\end{equation}
	we proved the following result: 
	
 For $n ,m \in \mathbb{N} ,\sigma \in \mathbb{C} \setminus D_{n ,m}$ where $D_{n ,m}:=\{ -n -m ,1 -\max (n ,m) ,\cdots  , -1 ,0 ,1\}\text{,}$ for $(p ,a) \in \mathbb{C}^{2}$, let us define the quantity $A_{(n ,m)}^{(p)} (\sigma ,a)$ by

\begin{equation}
	\label{eq1}
	A_{(n ,m)}^{(p)} (\sigma ,a)\,:= \sum _{j =0}^{m}\frac{( -m)_{j}}{( -n -m)_{j}} p^{j} \frac{(\sigma)_{j}}{j !} \zeta  (\sigma\/ +\/j ,a\/ +\/p) -\sum _{j =0}^{n}\dfrac{( -n)_{j}}{( -n -m)_{j}} ( -p)^{j} \frac{(\sigma)_{j}}{j !} \zeta  (\sigma +j ,a).
\end{equation}

\begin{theo}	\label{teo1}
	 If $\Re  (a) >0$ and $\Re  (a +p) >0,$ then \[A_{(n ,m)}^{(p)} (\sigma ,a) =( -1)^{n +1} p^{m +n +1} \frac{(\sigma)_{m +n +1}}{(n +m) !}\int_{0}^{1}x^{m} (1 -x)^{n} \zeta  (\sigma +m +n +1 ,a +px) d x\]
	where $(\sigma)_j:=\sigma(\sigma+1)\cdots(\sigma+j-1)=\frac{\Gamma(\sigma+j)}{\Gamma(\sigma)}$ is the Pochhammer symbol.
\end{theo}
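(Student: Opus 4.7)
The plan is to derive the identity from the classical Hermite integral form of the remainder in the Padé approximation of $e^x$, multiplied by the Mellin-type kernel that appears in the integral representation \eqref{intrepreszeta}.

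The starting point is Hermite's formula for the error of $[n,m]_{e^x}=P_n/Q_m$:
\[Q_m(x)\,e^x - P_n(x) \;=\; \frac{(-1)^m\,x^{m+n+1}}{(m+n)!}\int_0^1 t^m(1-t)^n\,e^{xt}\,dt.\]
This is classical --- both sides vanish at $x=0$ to order $m+n+1$ by the defining property of the Padé approximant, and a direct Taylor expansion of the integrand then makes the two sides agree from that order onwards.

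Substituting $x \mapsto -px$ and simplifying $(-1)^m(-1)^{m+n+1}=(-1)^{n+1}$ rewrites the identity as
\[Q_m(-px)\,e^{-px} - P_n(-px) \;=\; \frac{(-1)^{n+1}\,p^{m+n+1}\,x^{m+n+1}}{(m+n)!}\int_0^1 t^m(1-t)^n\,e^{-pxt}\,dt.\]
I would then multiply both sides by $\frac{x^{\sigma-1}e^{-ax}}{\Gamma(\sigma)(1-e^{-x})}$ and integrate over $x\in(0,\infty)$. On the left, the factor $Q_m(-px)e^{-px}\cdot e^{-ax}$ carries $e^{-(a+p)x}$, so applying \eqref{intrepreszeta} monomial by monomial --- and using $\Gamma(\sigma+j)/\Gamma(\sigma)=(\sigma)_j$ --- produces exactly the first sum of $A_{(n,m)}^{(p)}(\sigma,a)$ in \eqref{eq1}; likewise $P_n(-px)\cdot e^{-ax}$ produces the second sum. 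On the right, Fubini swaps the $x$- and $t$-integrals, the inner $x$-integral is $\Gamma(\sigma+m+n+1)\,\zeta(\sigma+m+n+1,a+pt)$, and what remains matches the formula stated in the theorem after renaming $t$ to $x$.

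The computation is valid for $\Re(\sigma)>1$ (so that every Mellin integral converges) together with the hypotheses $\Re(a)>0$ and $\Re(a+p)>0$, since convexity of the right half-plane gives $\Re(a+pt)>0$ for all $t\in[0,1]$, which is what Fubini needs. Both sides are meromorphic in $\sigma$, so the identity extends to $\sigma\in\mathbb{C}\setminus D_{n,m}$ by analytic continuation. The only genuinely technical point will be justifying Fubini uniformly in $t$, which is routine after bounding $|e^{-pxt}|$ on $t\in[0,1]$ by a $t$-independent function that is integrable against the remaining kernel $\frac{x^{\Re(\sigma)+m+n}e^{-\Re(a)x}}{|1-e^{-x}|}$.
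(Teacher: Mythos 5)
Your proposal is correct and follows essentially the same route the paper attributes to its reference \cite{reczeta}: the Hermite integral form of the Padé error for $e^{x}$, specialized at $-px$, multiplied against the kernel of \eqref{intrepreszeta} and integrated term by term, with the sign bookkeeping $(-1)^{m}(-1)^{m+n+1}=(-1)^{n+1}$ and the analytic continuation in $\sigma$ handled as you describe. The paper itself only cites the earlier work for this proof, and your reconstruction matches that method faithfully.
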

Remark 1. The conditions $\Re(a) > 0$ and$\Re(a+p) > 0$ are not restrictive since $\zeta(\sigma,a) = \zeta(\sigma,a+k)+\sum_{i=0}^{k-1}(i+a)^{-\sigma}
 > 0  $ and $\Re(a+k)>0$  for suitable integer k.
 
The convergence of  $A_{(n,m)}^{(p)}$ when $n$ or $m$ tends to infinity  has been  proved by the following Lemma.

\begin{lem}
	\label{lemma1}
	If $\left| p\right|  \leq \left| a\right|  ,\left| p  \right| \leq \left| a+p  \right|, \Re(a)>0$ and $\Re(a+p)>0,$ then $\forall  \sigma\in \mathbb{C}\setminus
\mathbb{Z}^-, \sigma\neq 1$
\[\left|\int_0^1 x^m (1-x)^n \zeta(\sigma+m+n+1,a+px)dx\right|\leq C \frac{1}{\left|a+p \right|^m }\frac{1}{\left|a \right|^n }\]
where $C$ is some constant independent of $m$ and $n$.

\end{lem}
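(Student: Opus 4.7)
The estimate rests on two elementary triangle-inequality bounds. For $x\in[0,1]$, writing $a+px = a-p(-x)$ and invoking $|p|\leq|a|$ gives $|a+px|\geq|a|-|p|x\geq|a|(1-x)$; analogously, from $a+px=(a+p)-p(1-x)$ and $|p|\leq|a+p|$ we get $|a+px|\geq|a+p|\,x$. Multiplying the first raised to the $n$th power by the second raised to the $m$th power produces the master inequality
\[
|a+px|^{m+n}\;\geq\;|a|^{n}\,|a+p|^{m}\,x^{m}(1-x)^{n},\qquad x\in[0,1].
\]
Because $\Re(a),\Re(a+p)>0$, squaring shows $|k+a|\geq|a|$ and $|k+a+p|\geq|a+p|$ for every integer $k\geq 0$, so the same derivation extends to $|k+a+px|^{m+n}\geq|a|^{n}|a+p|^{m}x^{m}(1-x)^{n}$ for every such $k$.

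Next, I would take $m+n$ large enough that $\Re(\sigma+m+n+1)>1$---the finitely many small $(m,n)$ failing this contribute only a constant to $C$---expand the Hurwitz zeta as $\zeta(\sigma+m+n+1,a+px)=\sum_{k\geq 0}(k+a+px)^{-(\sigma+m+n+1)}$, and control each summand by interpolating the two trivial lower bounds $|k+a+px|\geq|a+px|$ and $|k+a+px|\geq k+\alpha$, with $\alpha:=\min(\Re(a),\Re(a+p))>0$: this gives $|k+a+px|\geq|a+px|^{r}(k+\alpha)^{1-r}$ for $r\in[0,1]$. Choosing an integer $L>\max(0,-\Re(\sigma))$ and setting $r=(m+n-L)/(m+n+1+\Re(\sigma))$ yields $(1-r)(m+n+1+\Re(\sigma))=1+\Re(\sigma)+L>1$, so that $\sum_{k\geq 0}(k+\alpha)^{-(1+\Re(\sigma)+L)}$ converges to a finite number depending only on $\sigma$, $\alpha$, and $L$.

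Finally, applying the master inequality to $|a+px|^{-(m+n-L)}=|a+px|^{L}/|a+px|^{m+n}$ yields
\[
x^{m}(1-x)^{n}\,|a+px|^{-(m+n-L)}\;\leq\;\frac{|a+px|^{L}}{|a|^{n}\,|a+p|^{m}},
\]
and integrating over $x\in[0,1]$ bounds the full sum-integral by $M^{L}\zeta(1+\Re(\sigma)+L,\alpha)/(|a|^{n}|a+p|^{m})$, with $M:=\max_{x\in[0,1]}|a+px|\leq\max(|a|,|a+p|)$. The main technical obstacle is balancing the interpolation exponent $r$ and the integer $L$ so that the $k$-sum converges while the residual $|a+px|$-powers are still absorbed by $x^{m}(1-x)^{n}$ through the master inequality; once those exponents are tuned, the argument is uniform in $m,n$ and covers the case $\Re(\sigma)\leq 0$, giving the claimed bound with a constant $C$ depending only on $\sigma$, $a$, and $p$.
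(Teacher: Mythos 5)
Your argument is correct, and its core is the same device the paper relies on: the two pointwise bounds $|a+px|\ge |a+p|\,x$ and $|a+px|\ge |a|(1-x)$ (equivalently $\max_{0\le x\le1}|x/(a+px)|\le 1/|a+p|$ and $\max_{0\le x\le1}|(1-x)/(a+px)|\le 1/|a|$), which absorb $x^m(1-x)^n$ and produce the decay $|a+p|^{-m}|a|^{-n}$. Note that the paper does not reprove Lemma \ref{lemma1} in this text; the in-paper model is the proof of Lemma \ref{lemma3} (the $p=1$ case), which uses exactly this factorization. Where you genuinely diverge is in controlling the residual zeta factor: the paper cites the external fact from \cite{reczeta} that $\left|\zeta(z,w)\,w^{z}\right|$ is bounded uniformly in $z$ for $\Re(z)\ge 1$, and in Lemma \ref{lemma3} peels off an explicit power $|a+x|^{\ell-k-1}$ to handle the downward shift of the argument; you instead re-derive the needed uniform bound from scratch by interpolating $|k+a+px|\ge |a+px|^{r}(k+\alpha)^{1-r}$ and tuning $r$ and the integer $L$ so that $m+n-L$ powers feed into the master inequality while the $k$-sum converges. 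This buys a self-contained proof that explicitly covers $\Re(\sigma)\le 0$ and the finitely many small pairs $(m,n)$, at the cost of slightly heavier bookkeeping. Two small points to tidy: since $\left|(k+a+px)^{-s}\right|=|k+a+px|^{-\Re(s)}e^{\Im(s)\arg(k+a+px)}$, a harmless factor $e^{\pi|\Im(\sigma)|/2}$ must also be absorbed into $C$; and for the exceptional small pairs one should observe that $\sigma+m+n+1\ne 1$, which is exactly what the exclusion $\sigma\notin\mathbb{Z}^{-}$, $\sigma\ne 1$ guarantees.
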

So, under conditions on $a$ and $p$, the previous lemma shows that the quantity
$A_{(n,m)}^{(p)}(\sigma,a)$ tends to 0 when $m$ or $n$ tends to infinity.

In this paper, we   use Theorem \ref{teo1}  to construct formulas for Stieltjes constants.
To do that, we will use the following expression for Stieltjes constant 
\[\gamma_\ell(a)=(-1)^l\left( \frac{d}{d\sigma}\right)^\ell \left.\left( \zeta(\sigma,a)-\frac{1	}{\sigma-1} \right) \right| _{\sigma=1} .  \]

\section{Results}

In this section, we prove a general formula for Stieltjes constant which depends on two parameters: $n$ is the degree of the numerator and $m$ the degree of the denominator of the Padé approximant used to approximate the function $e^{-x}$ in  (\ref{intrepreszeta}).

We denote by $\zeta ^{(r ,0)} (j ,a)$ the $r$-{th} derivative of the function $\zeta(\sigma ,a)$ with respect to the variable $\sigma$, computed at $\sigma=j$.

\begin{theo}\label{teo2}
	Suppose that $\Re(a)>0$ and $\left| a\right| \geq 1$. Then
	
\begin{align}(-1)^\ell \gamma _{\ell} (a)&= \sum _{j =2}^{\max (m ,n)}\frac{( -m)_{j} -( -1)^{j} ( -n)_{j}}{( -n -m)_{j}} \frac{( -1)^{j }}{j !} \sum _{k =0}^{\ell}s (j ,k +1) \frac{\ell !}{(\ell -k) !} ( -1)^{k} \zeta ^{(\ell -k ,0)} (j ,a)+\nonumber\\ &\sum _{k =0}^{\ell}\frac{\ell !}{(\ell -k) !} \ln ^{\ell -k} a \sum _{j =1}^{m}\frac{( -m)_{j}}{( -n -m)_{j}} \frac{( -1)^{j  +\ell +1} a^{ -j}}{j !} s (j ,k +1)\nonumber\\\label{gammal(a)}& + \frac{\ln ^{\ell +1} a}{\ell +1}(-1)^{\ell+1}+{ R}_{\ell ,m ,n ,a },
\end{align} 
\end{theo}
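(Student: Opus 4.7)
The plan is to specialize Theorem \ref{teo1} to $p=1$. Then the shift identity $\zeta(\sigma+j,a+1)=\zeta(\sigma+j,a)-a^{-\sigma-j}$ collapses the two sums in \eqref{eq1} into a single sum over $\zeta(\sigma+j,a)$ together with a new sum involving $a^{-\sigma-j}$:
\[
A_{(n,m)}^{(1)}(\sigma,a) = \sum_{j=0}^{\max(m,n)} \frac{(-m)_j-(-1)^j(-n)_j}{(-n-m)_j}\,\frac{(\sigma)_j}{j!}\,\zeta(\sigma+j,a) - \sum_{j=0}^{m} \frac{(-m)_j}{(-n-m)_j}\,\frac{(\sigma)_j}{j!}\,a^{-\sigma-j}.
\]
A direct check shows that the combined coefficient of $\zeta(\sigma,a)$ at $j=0$ vanishes, and at $j=1$ equals $1$; hence for $m+n\geq 1$ the function $A_{(n,m)}^{(1)}(\sigma,a)$ is holomorphic and vanishes at $\sigma=0$.

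The key step is to divide by $\sigma$ and set $B(\sigma):=A_{(n,m)}^{(1)}(\sigma,a)/\sigma$. From Theorem \ref{teo1}, $(\sigma)_{m+n+1}/\sigma=(\sigma+1)\cdots(\sigma+m+n)$, so $B$ is holomorphic at $\sigma=0$. From the explicit expansion, the $j=1$ term of the first sum becomes $\zeta(\sigma+1,a)$ and the $j=0$ term of the second sum becomes $a^{-\sigma}/\sigma$; each has a simple pole at $\sigma=0$, but their difference is holomorphic, and by \eqref{Laurentzeta} (shifted by $1$) together with the Taylor expansion of $a^{-\sigma}$ satisfies
\[
\zeta(\sigma+1,a)-\frac{a^{-\sigma}}{\sigma}=\sum_{\ell=0}^{\infty}\frac{(-1)^{\ell}}{\ell!}\left(\gamma_\ell(a)+\frac{\ln^{\ell+1}a}{\ell+1}\right)\sigma^{\ell}.
\]
Its $\ell$-th derivative at $\sigma=0$ equals $(-1)^{\ell}\gamma_{\ell}(a)+(-1)^{\ell}\ln^{\ell+1}a/(\ell+1)$, which once moved to the right of the equation supplies $(-1)^\ell\gamma_\ell(a)$ together with the explicit term $(-1)^{\ell+1}\ln^{\ell+1}a/(\ell+1)$ appearing in \eqref{gammal(a)}.

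The remaining contributions (the sum over $j\geq 2$ in the zeta part and the sum over $j\geq 1$ in the $a^{-\sigma-j}$ part) are handled by Leibniz's rule. Writing $(\sigma)_j/(j!\sigma)=\tfrac{1}{j!}\sum_{k=0}^{j-1}(-1)^{j-k-1}s(j,k+1)\sigma^{k}$, where $s$ denotes the signed Stirling numbers of the first kind, a $\ell$-fold differentiation at $\sigma=0$ of $\frac{(\sigma)_j}{j!\sigma}\zeta(\sigma+j,a)$ produces
\[
-\frac{(-1)^{j}}{j!}\sum_{k=0}^{\ell}\frac{\ell!}{(\ell-k)!}(-1)^{k}s(j,k+1)\zeta^{(\ell-k,0)}(j,a),
\]
and an analogous computation for $\frac{(\sigma)_j}{j!\sigma}a^{-\sigma-j}$, using $(d/d\sigma)^{r}a^{-\sigma}|_{\sigma=0}=(-\ln a)^{r}$, yields an inner sum with a factor $(-1)^{\ell-k}\ln^{\ell-k}a$ in place of $\zeta^{(\ell-k,0)}(j,a)$. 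Transposing both families to the right-hand side reproduces exactly the two sums of \eqref{gammal(a)} with their prefactors, and the remainder is
\[
R_{\ell,m,n,a} := B^{(\ell)}(0) = (-1)^{n+1}\left.\frac{d^{\ell}}{d\sigma^{\ell}}\!\left[\frac{(\sigma+1)\cdots(\sigma+m+n)}{(m+n)!}\int_{0}^{1}x^{m}(1-x)^{n}\zeta(\sigma+m+n+1,a+x)\,dx\right]\right|_{\sigma=0}.
\]

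The main obstacle is the sign bookkeeping: one must reconcile the $(-1)^{j-k-1}$ coming from the identity $|s(j,k+1)|=(-1)^{j-k-1}s(j,k+1)$, the $(-1)^{\ell-k}$ generated by $(-\ln a)^{\ell-k}$, and the overall $(-1)^j$ produced by the substitution $\zeta(\sigma+j,a+1)=\zeta(\sigma+j,a)-a^{-\sigma-j}$, verifying that they collapse to the exponents $(-1)^{j}$ and $(-1)^{j+\ell+1}$ displayed in \eqref{gammal(a)}; everything else is a matter of carefully separating the $j=0,1$ pieces so that the remaining quotient by $\sigma$ is holomorphic at the origin.
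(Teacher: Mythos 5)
Your argument is correct and is essentially the paper's own proof, merely shifted by one unit: the paper sets $p=1$, studies $A^{(1)}_{(n,m)}(\sigma-1,a)/(\sigma-1)$ at $\sigma=1$, separates the $j=0,1$ terms to produce $\zeta(\sigma,a)-a^{-\sigma+1}/(\sigma-1)$, and invokes Coffey's formula for $\bigl(\tfrac{d}{d\sigma}\bigr)^{\ell}(\sigma)_{j-1}\big|_{\sigma=1}$, which is exactly your Stirling expansion of $(\sigma)_j/\sigma$ at $\sigma=0$. Your remainder $B^{(\ell)}(0)$ coincides with the paper's explicit $R_{\ell,m,n,a}$ after one further application of the same Leibniz--Stirling computation, so nothing essential is missing.
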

\noindent where $s(j,k)$ are the Stirling numbers of the first kind defined by $(x)_n=\sum_{k=0}^n  (-1)^{n-k} s(n,k) x^k$ and the remainder term  is
\begin{equation}{R}_{\ell ,m ,n ,a} =\frac{( -1)^{m +1}}{(m +n) !} \sum _{k =0}^{\ell}s (m +n +1 ,k +1) \frac{\ell !}{(\ell -k) !} ( -1)^{k} \int _{0}^{1}x^{m} (1 -x)^{n} \zeta ^{(\ell -k ,0)} (m +n +1 ,a +x)dx.
\end{equation} 

The Stirling numbers satisfies also some known properties used in this paper:

\begin{align}
	\sum_{n=0}^\infty \frac{x^n}{n!}{s(n,k)}&=\frac{(\ln(1+x))^k}{k!},\;\;\;\left| x\right| <1,\label{log-stirling}\\
	s(n,1)&=(-1)^{n-1}(n-1)!,\label{sn1}\\
	s(n,2)&=(-1)^n(n-1)!H_{n-1}\label{sn2},
\end{align}
where $H_{j}=1+\frac{1}{2}+\cdots+\frac{1}{j}=\int _{0}^{1}\frac{1 -t^{j }}{1 -t} d t$
is the Harmonic number.

 Particular case.
 $\ell=0$
 \begin{align} \label{gamma(a)} \gamma(a)&= \nonumber-\sum _{j =2}^{\max (m ,n)}\frac{( -m)_{j} -( -1)^{j} ( -n)_{j}}{( -n -m)_{j}} \frac{1}{j}    \zeta (j ,a) +  \sum _{j =1}^{m}\frac{( -m)_{j}}{( -n -m)_{j}} \frac{ a^{ -j}}{j }- \\& - \ln a+{ R}_{0 ,m ,n ,a },
 \end{align} 
where
$$ {R}_{0 ,m ,n ,a} =( -1)^{n}   \int \nolimits_{0}^{1}x^{m} (1 -x)^{n} \zeta (m +n +1 ,a+x)dx.$$

For $a=1$,  formula (\ref{gamma(a)}) becomes 
\begin{equation}
\gamma=-\sum _{j =2}^{\max (m ,n)}\frac{( -m)_{j} -( -1)^{j} ( -n)_{j}}{( -n -m)_{j}} \frac{1}{j}    \zeta (j ) +  \sum _{j =1}^{m}\frac{( -m)_{j}}{( -n -m)_{j}} \frac{ 1}{j }+\varepsilon_{n,m},
\end{equation}
where $\varepsilon_{n,m}=( -1)^{n}   \int_{0}^{1}x^{m} (1 -x)^{n} \zeta (m +n +1 ,1+x)dx.$

The previous formula  has been proved in 
\cite[p. 512]{prevost-elsner}.

\section{proof of Theorem \ref{teo2}}

First, for $p=1$, 
we divide   the relation (\ref{eq1})  by $s$,     we replace $\sigma$ by $\sigma -1$ and we compute the  $\ell$-th derivative at $\sigma=1$.

	\begin{align}
	\frac{ A_{(n ,m)}^{(1)} (\sigma-1
		,a)}{\sigma-1}&=\sum _{j
		=0}^{m}\frac{(-m)_{j}}{(-n -m)_{j}} \frac{(\sigma)_{j-1}}{j !}
	\zeta  (\sigma-1+j ,a+1) -\sum _{j
		=0}^{n}\frac{(-n)_{j}(-1)^j}{(-n -m)_{j}}
	\frac{(\sigma)_{j-1}}{j !} \zeta (\sigma -1 +j
	,a)\label{eq3}
	\\& =(-1)^{n +1} \frac{(\sigma )_{m +n}}{(n +m) !} \int \nolimits_{0}^{1}x^{m}
	(1 -x)^{n} \zeta  (\sigma +m +n ,a +x) d x.\label{rel7}
\end{align}

  In the relation (\ref{eq3}), the term for $j=0$ is $\frac{1	}{\sigma-1}\zeta(\sigma-1,a+1)-\frac{1	}{\sigma-1}\zeta(\sigma-1,a)=\frac{-1	}{\sigma-1} a^{-\sigma+1}$.
  
  For $j=1$, it is $\frac{-m}{-n-m} \zeta(\sigma,a+1)-\frac{n}{-n-m} \zeta(\sigma,a)=\zeta(\sigma,a)-\frac{m}{n+m}a^{-\sigma}$.

Thus 
\begin{align*}
\left( \frac{d}{d\sigma}\right)^\ell \left.  \frac{A_{(n ,m)}^{(1)} (\sigma -1 ,a)}{\sigma-1}  \right| _{\sigma=1}&=\left.\left( \frac{d}{d\sigma}\right)^\ell\left(\zeta  (\sigma ,a) -\frac{a^{ -\sigma+1}}{\sigma -1} -\frac{m}{n +m}a^{ -\sigma}\right) \right| _{\sigma=1} \\&+\sum _{j =2}^{m}\frac{( -m)_{j}}{( -n -m)_{j}} \frac{1}{j !}\left.   \left( \frac{d}{d\sigma}\right)  ^\ell(\sigma)_{j -1}\zeta  (\sigma -1 +j ,a +1) \right| _{\sigma=1} \\&-\sum _{j =2}^{n}\frac{( -n)_{j}}{( -n -m)_{j}} \frac{( -1)^{j}}{j !}\left.\left( \frac{d}{d\sigma}\right)^\ell(\sigma)_{j -1}\zeta  (\sigma -1 +j ,a) \right| _{\sigma=1}.
\end{align*}

 Using the relation $\zeta(\sigma,a+1)=\zeta(\sigma,a)-a^{-\sigma}$, we find
 \begin{align*}
 	\left( \frac{d}{d\sigma}\right)^\ell\left. \frac{A_{(n ,m)}^{(1)} (\sigma -1 ,a)}{\sigma-1}\right| _{\sigma=1}&=\left( \frac{d}{d\sigma}\right)^\ell\left. \left(\zeta  (\sigma ,a) -\frac{a^{ -\sigma+1}}{\sigma -1}\right)\right| _{\sigma=1} \\&+\sum _{j =2}^{m}\frac{( -m)_{j}-(-n)_j(-1)^j}{( -n -m)_{j}} \frac{1}{j !}\left. \left( \frac{d}{d\sigma}\right)^\ell(\sigma)_{j -1}\zeta  (\sigma -1 +j ,a )\right| _{\sigma=1} \\&
- \sum _{j =1}^{m}\frac{( -m)_{j}}{( -n -m)_{j}} \frac{1}{j !}\left. \left( \frac{d}{d\sigma}\right)^\ell(\sigma)_{j -1} a^{-\sigma-j+1} \right| _{\sigma=1}	.
 \end{align*}

Now, we use the formula  
\cite[Lemma 1]{coffeyRocky2014}

\[\left. \left( \frac{d}{d\sigma}\right)^\ell(\sigma)_{j -1}\right| _{\sigma=1} =s (j ,\ell +1) \ell !( -1)^{j -1 +\ell} ,j \geq 1.\]

Then, with the product rule, we  obtain
\[\left. \left( \frac{d}{d\sigma}\right)^\ell(\sigma)_{j -1}\zeta  (\sigma -1 +j ,a)\right| _{\sigma=1} =\sum _{k =0}^{\ell}s (j ,k +1) \frac{\ell !}{(\ell -k) !} ( -1)^{j-1+k} \zeta ^{(\ell -k )} (j ,a) ,j \geq 2,\]

and 

\[\left. \left( \frac{d}{d\sigma}\right)^\ell(\sigma)_{j -1}a^{ -\sigma -j +1}\right| _{\sigma=1} =\sum _{k =0}^{\ell}\frac{\ell !}{(\ell -k) !} (\ln a) ^{\ell -k} ( -1)^{j +\ell +1}a^{ -j}s (j ,k +1) ,j \geq 1,\]

 \begin{align*}
 \left. \left( \frac{d}{d\sigma}\right)^\ell\left( \zeta  (\sigma ,a) -\frac{a^{ -\sigma+1}}{\sigma -1}\right)\right| _{\sigma=1}&=\left( \frac{d}{d\sigma}\right)^\ell\left. \left( \zeta  (\sigma ,a) -\frac{1}{\sigma-1}+\frac{1-a^{ -\sigma+1}}{\sigma -1}\right)\right| _{\sigma=1}\\ &=( -1)^{\ell} \gamma _{\ell} (a) +( -1)^{\ell}\frac{\ln^{\ell +1} a }{\ell +1}.
 \end{align*}

It arises

\begin{align*}
	( -1)^{\ell} \gamma _{\ell} (a)& = -\frac{\ln ^{\ell +1} a}{\ell +1} ( -1)^{\ell}+\sum _{k =0}^{\ell}\frac{\ell !}{(\ell -k) !} \ln ^{\ell -k} a \sum _{j =1}^{m}\frac{( -m)_{j}}{( -n -m)_{j}} \frac{( -1)^{j +\ell +1} a^{ -j}}{j !} s (j ,k +1)\\&+ \sum _{j =2}^{\max (m ,n)}\frac{( -m)_{j} -( -1)^{j} ( -n)_{j}}{( -n -m)_{j}} \frac{( -1)^{j +1}}{j !} \sum _{k =0}^{\ell}s(j ,k +1) \frac{\ell !}{(\ell -k) !} ( -1)^{k} \zeta ^{(\ell -k ,0)} (j ,a) 
 +R_{\ell ,m ,n ,a}
\end{align*}  
where

\begin{equation}\label{eq9}
	R_{\ell ,m ,n ,a} =\frac{( -1)^{m +1}}{(m +n) !} \sum _{k =0}^{\ell}s (m +n +1 ,k +1) \frac{\ell !}{(\ell -k) !} ( -1)^{k} \int \nolimits_{0}^{1}x^{m} (1 -x)^{n} \zeta ^{(\ell -k ,0)} (m +n +1 ,a +x).
\end{equation} 
	The derivation of $\frac{ A_{(n ,m)}^{(1)} (\sigma-1 ,a)}{\sigma-1}$ with respect to $\sigma$ is justified by Lemma \ref{lem2} since the function $\phi(x) $ is integrable.
	
In the following, we slightly modify       Lemma \ref{lemma1} for $p=1$.
\begin{lem}
	\label{lemma3}
	Suppose that $\Re(a)>0$ and $\left| a\right| \geq 1$. Then $\forall k\in[0,\ell], \forall m,n $ such that $m+n\geq l+1$,
	\[\left|\int_0^1 x^m (1-x)^n \zeta(m+n+1+k-\ell,a+x)dx\right|\leq K \frac{1}{\left|a+1 \right|^{m} }\frac{1}{\left|a \right|^n },\]
	where $K$ is some constant independent of $m$ and $n$.
\end{lem}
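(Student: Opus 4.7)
The statement is essentially the $p=1$ specialization of Lemma \ref{lemma1}, with $\sigma$ taken to be $k-\ell\in\{-\ell,\dots,0\}$, plus the observation that the weaker restriction $m+n\ge\ell+1$ replaces the nominal exclusion $\sigma\notin\mathbb{Z}^{-}\cup\{1\}$ of that lemma. My plan is therefore to verify the hypotheses of Lemma \ref{lemma1} and then transfer its integral estimate, value of $k$ by value of $k$, and take a maximum over the finite index set.

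First, I would check that with $p=1$ the geometric hypotheses of Lemma \ref{lemma1} follow automatically from $\Re(a)>0$ and $|a|\ge 1$: one has $|p|=1\le|a|$; $|a+1|^{2}=(\Re(a)+1)^{2}+(\Im(a))^{2}>1=|p|^{2}$; and $\Re(a),\Re(a+1)>0$. Next, for a fixed $k\in\{0,\dots,\ell\}$ I substitute $\sigma:=k-\ell$ into the integrand appearing in Lemma \ref{lemma1}, so that $\sigma+m+n+1=m+n+1+k-\ell$. The hypothesis $m+n\ge\ell+1$ combined with $k\ge 0$ forces this value to be at least $2$, so the first argument of $\zeta$ stays uniformly away from the pole at $1$, and $\zeta(m+n+1+k-\ell,a+x)$ remains bounded on $[0,1]$ via its absolutely convergent series representation.

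At this point I would invoke (or, more precisely, rerun) the proof of Lemma \ref{lemma1}, which then produces for each $k\in\{0,\dots,\ell\}$ a constant $C_{k}$, independent of $m$ and $n$, such that
\[
\left|\int_{0}^{1}x^{m}(1-x)^{n}\zeta(m+n+1+k-\ell,a+x)\,dx\right|\le \frac{C_{k}}{|a+1|^{m}\,|a|^{n}}.
\]
Since $k$ ranges over a finite set, setting $K:=\max_{0\le k\le\ell}C_{k}$ yields a constant independent of $m$ and $n$, as required.

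The main obstacle is to justify that the integral estimate of Lemma \ref{lemma1} survives at $\sigma=k-\ell\in\mathbb{Z}^{-}\cup\{0\}$, even though those values are formally excluded in that lemma's statement. I expect the exclusion $\sigma\notin\mathbb{Z}^{-}\cup\{1\}$ to originate from the algebraic identity \eqref{eq1} (where the individual terms $\zeta(\sigma+j,\cdot)$ for small $j$ could hit their pole at $1$) rather than from the integral side of the identity. The integral bound itself only needs $\sigma+m+n+1\neq 1$, i.e., $m+n\ge\ell+1$ here, together with the continuity of the integrand and the pointwise estimate of $|\zeta(\sigma+m+n+1,a+x)|$ in terms of $|a+x|^{-(\sigma+m+n+1)}$; a careful re-reading of the original proof should confirm that these ingredients carry over verbatim under the present weaker hypothesis.
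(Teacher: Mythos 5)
Your proposal is correct and follows essentially the same route as the paper: the paper's proof is precisely a rerun of the Lemma \ref{lemma1} argument with $p=1$, splitting $x^m(1-x)^n$ into $\left|\frac{x}{a+x}\right|^m\left|\frac{1-x}{a+x}\right|^n|a+x|^{m+n}$, absorbing the leftover factor $|a+x|^{\ell-k-1}$ and the uniform bound on $|\zeta(z,a+x)(a+x)^z|$ for $\Re(z)\geq 1$ into a constant depending only on $k\leq\ell$. Your observation that the exclusion $\sigma\notin\mathbb{Z}^-$ in Lemma \ref{lemma1} is irrelevant for the integral side (only $m+n+1+k-\ell\geq 2$ matters) is exactly what the paper's direct estimate confirms.
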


\begin{proof}
 
 \[	\left|\int_0^1 x^m (1-x)^n \zeta(m+n+1+k-\ell,a+x)dx\right|\leq \int_0^1  x^m(1-x)  ^n  \left| \zeta(m+n+1+k-\ell,a+x)\right|dx\]
 \[\leq\int_0^1   \left| \frac{x}{a+x} \right| ^m \left| \frac{1-x}{a+x} \right| ^n \left|a+x\right|^{l-k-1} \left| \zeta(m+n+1+k-\ell,a+x)\right|\left|a+x\right|^{m+n+1+k-l}dx.\]
 
  In \cite{reczeta}, we proved that for $\Re(z)\geq 1, \left|  \zeta(z,a+x)(a+x)^z\right| $ is bounded by some constant $C$ independent of $z$ and that
  $\max_{0\leq x\leq 1}\left| \frac{x}{a+x}\right| =\frac{1}{\left|a+1\right|  }$ and  $\max_{0\leq x\leq 1}\left| \frac{1-x}{a+x}\right| =\frac{1}{\left|a\right|  }$.
   This proves the Lemma, where $K=C\int_{0}^{1} \left|a+x \right|^{l-k-1} dx.$

\end{proof}

In order to prove the convergence to 0 of the remainder term $R_{l,m,n,a}$ of our principal formula (\ref{gammal(a)}), we need a preliminary technical result.

\begin{lem}\label{lem2}
	Suppose that  $r \in \mathbb N,$ $a \in \mathbb C, \Re(a)>0$ and $\left| a\right| \geq 1$, $j> r+1$.
	
	Then $\forall x\geq 0, \left| a+x\right|\geq 1 $, and
	\begin{align}
		\left| 	   \zeta^{(r,0)}(j,a+x)\right| &\leq \sum_{k\geq 0}\frac{\left( (\ln\left|k+a+x \right|)^2+\pi^2/4\right) ^{r/2} }{\left|k+a+x \right|^{j}}=:\phi(x)\label{eq7}\\
		\phi(x)&\leq2^{r}\zeta(j-r,\Re(a)+x)\label{eq8}
	\end{align}
	\begin{proof}
		\begin{align*}
			\left| 	   \zeta^{(r,0)}(j,a+x)\right|&= \left| \sum_{k\geq 0}(-1)^r\frac{\left( \ln(k+a+x \right) ^{r} }{(k+a+x )^{j}}\right| 
			\leq \sum_{k\geq 0}\frac{\left|  s(k+a+x )\right|  ^{r} }{\left| k+a+x\right|^{j}}
			\\&=\sum_{k\geq 0}\frac{\left|  \ln\left|k+a+x\right| +i Arg(k+a+x) \right|  ^{r} }{\left| k+a+x\right|^{j}}\\&=
			\sum_{k\geq 0}\frac{   (\ln\left|k+a+x\right|)^2 + (Arg(k+a+x))^2 )  ^{r/2} }{\left| k+a+x\right|^{j}}
			\\&\leq
			\sum_{k\geq 0}\frac{   (\ln\left|k+a+x\right|)^2 + (\pi/2)^2 )  ^{r/2} }{\left| k+a+x\right|^{j}} \leq
			\sum_{k\geq 0}\frac{ ( \left|k+a+x\right|^2 +  3 \left|k+a+x\right|^2 )  ^{r/2} }{\left| k+a+x\right|^{j}} \\&\leq
			2^r \sum_{k\geq 0}\frac{   \left|k+a+x\right|^{r}} {\left| k+a+x\right|^{j}}=
			2^r \sum_{k\geq 0}\frac{   1} {\left| k+a+x\right|^{j-r}}\\&\leq
			2^r \sum_{k\geq 0}\frac{   1} { (k+\Re(a)+x)^{j-r}}=	2^r  \zeta(j-r,\Re(a)+x)
		\end{align*}
		
	\end{proof}
	
\end{lem}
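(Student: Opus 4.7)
The plan is to prove the three claims of the lemma in order: the geometric fact $|a+x|\geq 1$, the termwise bound on $|\zeta^{(r,0)}(j,a+x)|$ by the series $\phi(x)$, and the final comparison $\phi(x)\leq 2^r\zeta(j-r,\Re(a)+x)$. Each step is essentially elementary, so the real work is in organizing the inequalities cleanly.

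For $|a+x|\geq 1$ I would write $a=\alpha+i\beta$ with $\alpha=\Re(a)>0$, so that $|a+x|^2=(\alpha+x)^2+\beta^2\geq \alpha^2+\beta^2=|a|^2\geq 1$ since $x\geq 0$. The same argument applied to $k+a+x$ (with $k+\alpha>0$) shows $|k+a+x|\geq 1$ for every $k\in\mathbb{N}$, which will be needed later. For the bound $|\zeta^{(r,0)}(j,a+x)|\leq \phi(x)$ I would differentiate the Dirichlet series defining $\zeta(\sigma,a+x)$ termwise with respect to $\sigma$; this is legitimate on the half-plane $\Re(\sigma)>1$, in particular at $\sigma=j$ because $j>r+1>1$. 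One obtains $\zeta^{(r,0)}(j,a+x)=\sum_{k\geq 0}(-\log(k+a+x))^r(k+a+x)^{-j}$. Taking absolute values termwise and using the principal branch $\log(k+a+x)=\log|k+a+x|+i\arg(k+a+x)$ with $|\arg(k+a+x)|<\pi/2$ (since $\Re(k+a+x)>0$) yields $|\log(k+a+x)|^r\leq((\log|k+a+x|)^2+\pi^2/4)^{r/2}$, which gives (\ref{eq7}).

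For (\ref{eq8}) I would exploit $|k+a+x|\geq 1$ to deduce $\log|k+a+x|\geq 0$ and $(\log|k+a+x|)^2\leq |k+a+x|^2$. Together with the trivial $\pi^2/4\leq 3\leq 3|k+a+x|^2$, this gives $(\log|k+a+x|)^2+\pi^2/4\leq 4|k+a+x|^2$, so the numerator in $\phi(x)$ is at most $2^r|k+a+x|^r$. Dividing and using the real-part bound $|k+a+x|\geq k+\Re(a)+x$, one gets $\phi(x)\leq 2^r\sum_{k\geq 0}(k+\Re(a)+x)^{-(j-r)}=2^r\zeta(j-r,\Re(a)+x)$, where the convergence is guaranteed by the hypothesis $j-r>1$.

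The main ``obstacle'' is really just bookkeeping; the substantive ingredients are the bound $|\arg(k+a+x)|\leq \pi/2$ (which is where the positivity hypothesis $\Re(a)>0$ is genuinely used) and the inequality $(\log y)^2\leq y^2$ for $y\geq 1$, both of which are immediate.
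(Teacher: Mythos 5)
Your proposal is correct and follows essentially the same route as the paper's own proof: termwise differentiation of the Dirichlet series, the bound $|\mathrm{Arg}(k+a+x)|\leq\pi/2$ from positivity of the real part, the inequalities $(\ln|k+a+x|)^{2}\leq|k+a+x|^{2}$ and $\pi^{2}/4\leq 3|k+a+x|^{2}$ (valid since $|k+a+x|\geq 1$), and finally $|k+a+x|\geq k+\Re(a)+x$. The only difference is that you spell out the justification of $|a+x|\geq 1$ and of the termwise differentiation, which the paper leaves implicit.
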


\begin{theo}\label{teo3}
	
	If $\Re(a)\geq 1$  then
\[\forall n \in \mathbb{N} ,\lim_{m\rightarrow \infty }	R_{\ell ,m ,n ,a}=0,\]

	\[\forall m \in \mathbb{N},\lim_{n\rightarrow \infty }	R_{\ell ,m ,n ,a}=0.\]

\end{theo}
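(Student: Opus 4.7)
The plan is to bound $|R_{\ell,m,n,a}|$ uniformly in $m,n$ by an explicit expression that vanishes in both limits, combining Lemma \ref{lem2} for the derivatives of Hurwitz zeta, the standard Beta integral, and an asymptotic estimate for the unsigned Stirling numbers of the first kind.

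First, I would start from (\ref{eq9}) and bound each derivative $|\zeta^{(\ell-k,0)}(m+n+1,a+x)|$ via the majorization $\le 2^{\ell-k}\,\zeta(m+n+1-\ell+k,\Re(a)+x)$ of Lemma \ref{lem2}. The assumption $\Re(a)\ge 1$ gives $\Re(a)+x\ge 1$ on $[0,1]$, and the monotonicity of $\zeta(s,y)$ in $y>0$ together with $\zeta(s)\le \zeta(2)=\pi^{2}/6$ for $s\ge 2$ makes this uniformly bounded by $C\,2^{\ell-k}$ as soon as $m+n\ge \ell+1$. Integrating against $x^{m}(1-x)^{n}$ then gives
\[
\Bigl|\int_0^1 x^{m}(1-x)^{n}\,\zeta^{(\ell-k,0)}(m+n+1,a+x)\,dx\Bigr|\;\le\; C\,2^{\ell-k}\,B(m+1,n+1)\;=\;C\,2^{\ell-k}\,\frac{m!\,n!}{(m+n+1)!}.
\]

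Next, I would estimate the Stirling factor. The generating function (\ref{log-stirling}), read on the unsigned Stirling numbers, yields the asymptotic $|s(N,k+1)|/N!=O\bigl((\log N)^{k}/N\bigr)$ for fixed $k$, hence $|s(m+n+1,k+1)|/(m+n)!=O\bigl((\log(m+n))^{k}\bigr)$. Inserting these two bounds into (\ref{eq9}) and summing over $k=0,\dots,\ell$ produces a uniform estimate of the form
\[
|R_{\ell,m,n,a}|\;\le\; K_{\ell}\,\bigl(2+\log(m+n)\bigr)^{\ell}\,\frac{m!\,n!}{(m+n+1)!}
\]
for $m+n$ large, with $K_{\ell}$ independent of $m,n$. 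Both conclusions of the theorem then follow from elementary Beta asymptotics: $\frac{m!\,n!}{(m+n+1)!}\sim n!/m^{n+1}$ as $m\to\infty$ with $n$ fixed, and $\sim m!/n^{m+1}$ as $n\to\infty$ with $m$ fixed (the case $m=0$ reducing to $1/(n+1)$), and the polynomial decay dominates the $(\log(m+n))^{\ell}$ factor.

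The main obstacle is choosing a Stirling estimate sharp enough to beat the factorial $(m+n)!$ in the denominator. The naive bound $|s(N,k)|\le N!$ is too weak: it gives $|R_{\ell,m,n,a}|\lesssim \frac{m!n!}{(m+n)!}$, which does not tend to zero in the case $m=0$, $n\to\infty$. Similarly, Lemma \ref{lemma3} settles the first limit immediately, because $|a+1|\ge 2$ provides exponential decay $1/|a+1|^{m}$ that crushes the polylog Stirling growth; but in the second limit Lemma \ref{lemma3} gives only $1/|a|^{n}\le 1$, which is useless when $|a|=1$ (the case $a=1$), so the refined Beta-integral bound above is indispensable there.
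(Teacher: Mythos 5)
Your proof is correct, and it shares the paper's skeleton (start from (\ref{eq9}), control the Stirling factor by an $O((\log(m+n))^{k})$ estimate equivalent to Adell's explicit bound, and majorize the zeta derivatives via Lemma \ref{lem2}), but it diverges in the one step that actually matters: the bound on $\int_{0}^{1}x^{m}(1-x)^{n}\zeta(m+n+1-\ell+k,\Re(a)+x)\,dx$. The paper keeps the weights $|x/(a+x)|^{m}$ and $|(1-x)/(a+x)|^{n}$ as in Lemma \ref{lemma3}, obtaining the geometric rates $(\Re(a)+1)^{-m}$ and $\Re(a)^{-n}$; this is sharper but collapses to a useless bound when $\Re(a)=1$ and $n\to\infty$, forcing the paper into a separate ad hoc computation $\int_{0}^{1}\bigl((1-x)/(1+x)\bigr)^{n}dx\le C/(2n)$ for that boundary case. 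You instead note that $\zeta(s,\Re(a)+x)\le\zeta(2)$ uniformly once $s\ge 2$ and absorb everything into the Beta integral $B(m+1,n+1)=\frac{m!\,n!}{(m+n+1)!}$, whose polynomial decay $\sim n!/m^{n+1}$ (resp. $\sim m!/n^{m+1}$) beats the polylogarithmic Stirling growth in both limits simultaneously, including $m=0$, $n\to\infty$, $a=1$. So your argument is more uniform and avoids the case split, at the price of proving only polynomial rather than geometric decay — the paper's route is what yields the rate statements $\lim_{m}|R_{\ell,m,n,a}|^{1/m}\le 1/(\Re(a)+1)$ recorded after (\ref{eq12}), which your bound cannot recover. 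Two minor points: your Stirling asymptotic $|s(N,k+1)|/N!=O((\log N)^{k}/N)$ is standard but does not follow immediately from the radius-one generating function (\ref{log-stirling}); it is cleanest to cite Adell's explicit inequality as the paper does. And you should state explicitly that the monotonicity $\zeta(s,y)\le\zeta(s,1)=\zeta(s)$ uses $\Re(a)+x\ge 1$, which is exactly where the hypothesis $\Re(a)\ge 1$ enters.
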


\begin{proof}
  From (\ref{eq9}), we can write 
\[	\left| R_{\ell ,m ,n ,a}\right|  \leq\frac{1}{(m +n) !} \sum _{k =0}^{\ell}\left| s (m +n +1 ,k +1)\right|  \frac{\ell !}{(\ell -k) !}  \int \nolimits_{0}^{1}x^{m} (1 -x)^{n}\left|  \zeta ^{(\ell -k ,0)} (m +n +1 ,a+x)\right| dx \]
\end{proof}

Adell in \cite{adell2022}, proved some explicit upper bounds for the Stirling numbers of the first kind. We will use the following formula:
 \[  for \,\, l=1,\cdots,j-1, \;\;\;\; \left| s(j+1 ,\ell +1)\right|  \leq  \frac{j!}{\ell !}(\ln j )^{\ell}  \left( 1 +\frac{\ell}{\ln j} \right). \]

\bigskip \bigskip 
\begin{align}
	\left| R_{\ell ,m ,n ,a}\right|  &\leq \sum _{k =0}^{\ell} (\ln(m+n))^k\left(1+\frac{k}{\ln(m+n)}\right)  \frac{1}{(\ell -k) !}  \int _{0}^{1}x^{m} (1 -x)^{n}\left|  \zeta ^{(\ell -k ,0)} (m +n +1 ,a+x)\right| dx  \nonumber\\ 
	&\leq \left(1+\frac{\ell}{\ln(m+n)}\right)(\ln(m+n))^\ell
	\sum _{k =0}^{\ell}2^{\ell-k}\int_0^1 x^m(1-x)^n \zeta(m+n+1-\ell+k,\Re(a)+x)\nonumber\\(\rm{Lemma\; } \ref{lem2})\nonumber\\
	&\leq C \left(1+\frac{\ell}{\ln(m+n)}\right)(\ln(m+n))^\ell 2^{\ell+1}
  \frac{1}{(\Re(a)+1 )^{m} }\frac{1}{\Re(a) ^n }.
  \label{eq12}
\end{align}

The second inequality is valid since $m$ and/or $n$ tends to infinity and thus the parameter $m+n+1+k-\ell$ is greater that 1.

Thus, if $\Re(a)> 1$, the limit of the remainder $R_{\ell,mn,a}$ is $0$ when $m$ or $n$  tend to infinity and we have
\[\lim_{m\rightarrow \infty }\left| R_{\ell,m,n,a}\right|^{1/m} \leq \frac{1}{\Re(a)+1},
\]

\[\lim_{n\rightarrow \infty }\left| R_{\ell,m,n,a}\right|^{1/n} \leq \frac{1}{\Re(a)}.
\]
If $\Re(a)=1,$ then $\lim_{m\rightarrow \infty }\left| R_{\ell,m,n,a}\right|^{1/m} \leq 1/2 $.

Now, if $\Re(a)=1$ and $n$ tends to infinity, we have to bound the integral term of  right hand side of (\ref{eq12}) as following.
\[\int_0^1x^m  (1-x)^n \zeta(m+n+1+k-\ell,1+x)dx\leq
\int_0^1(1-x)^n \zeta(m+n+1+k-\ell,1+x)dx\]
\[\leq \int_0^1\frac{(1-x)^n}{(1+x)^{m+n+1+k-\ell}} (1+x)^{m+n+1+k-\ell} \zeta(m+n+1+k-l,1+x)dx\]
\[\leq \int_0^1\frac{(1-x)^n}{(1+x)^{n}} (1+x)^{m+n+1+k-\ell} \zeta(m+n+1+k-\ell,1+x)dx\]
\[\leq C\int_0^1\frac{(1-x)^n}{(1+x)^{n}}  dx\leq  \frac{C}{2n} . \] (see Lemma \ref{lemma3}).

We can conclude that, when $\Re(a)=1$ the remainder term tends to $0$ when $n$ tends to infinity.

\vskip 2cm

{\bf Remark}

To accelerate the computation, we can use the following  relation (for $p$ integer):

\[\gamma _{\ell} (a) =\gamma _{\ell} (a +p) +\sum _{k =0}^{p -1}\frac{\ln ^{\ell} (a +k)}{a +k}.\]

It leads to
\begin{align*}
( -1)^{\ell} \gamma _{\ell} (a)& =( -1)^{\ell} \gamma _{\ell} (a +p) +( -1)^{\ell} \sum _{k =0}^{p -1}\frac{\ln ^{\ell} (a +k)}{a +k}\\
& =( -1)^{\ell} \sum _{k =0}^{p -1}\frac{\ln ^{\ell} (a +k)}{a +k}+\\& \sum _{j =2}^{\max (m ,n)}\frac{( -m)_{j} -( -1)^{j} ( -n)_{j}}{( -n -m)_{j}} \frac{( -1)^{j +1}}{j !} \sum _{k =0}^{\ell}S (j ,k +1) \frac{\ell !}{(\ell -k) !} ( -1)^{k} \zeta ^{(\ell -k ,0)} (j ,a +p) +\\
&\sum _{k =0}^{\ell}\frac{\ell !}{(\ell -k) !} \ln ^{\ell -k} (a +p) \sum _{j =1}^{m}\frac{( -m)_{j}}{( -n -m)_{j}} \frac{( -1)^{j +\ell +1} (a +p)^{ -j}}{j !} S (j ,k +1) +R_{\ell ,m ,n ,a +p},
\end{align*}

where the remainder  terms $R_{\ell ,m ,n ,a +p}$ converges to 0 as $\displaystyle \frac{1}{\Re(a+p+1)^m \Re(a+p)^n}$
\vskip 2cm

 \section{Particular cases}
 
 In this section, we consider all the possible values for the four  parameters $ m, n,a$.

\vskip 2cm
\subsection{$m \in \mathbb{N},n=0, \Re(a)\geq 1$}

After simplification, we get

\begin{align}(-1)^\ell \gamma _{\ell} (a)&= \nonumber\sum _{j =2}^{m}  \frac{( -1)^{j }}{j !} \sum _{k =0}^{\ell}s (j ,k +1) \frac{\ell !}{(\ell -k) !} ( -1)^{k} \zeta ^{(\ell -k ,0)} (j ,a)\\ &+\sum _{k =0}^{\ell}\frac{\ell !}{(\ell -k) !} \ln ^{\ell -k} a \sum _{j =1}^{m}\frac{( -1)^{j  +\ell +1} a^{ -j}}{j !} s (j ,k +1)\\ \label{gammaL}& +\nonumber \frac{\ln ^{\ell +1} a}{\ell +1}(-1)^{\ell+1}+{ R}_{\ell ,m ,0 ,a }.
\end{align} 

If $m$ tends to infinity, it arises  (see Theorem \ref{teo3})

\begin{equation}
	\begin{split}
		(-1)^\ell \gamma _{\ell} (a) = {} & \sum _{j =2}^{\infty}  
		\frac{( -1)^{j }}{j !} \sum _{k
			=0}^{\ell}s (j ,k +1)
		\frac{\ell !}{(\ell -k) !}(-1)^{k} \zeta
		^{(\ell -k ,0)} (j ,a)    \\ \label{gamma2(a)}
		& +\sum
		_{k=0}^{\ell}\frac{\ell !}{(\ell -k) !}
		\ln ^{\ell -k} a \sum
		_{j=1}^{\infty}\frac{( -1)^{j+\ell +1}
			a^{ -j}}{j !} s (j ,k +1) \\
		& + \frac{\ln
			^{\ell +1} a}{\ell +1}(-1)^{\ell+1}.
	\end{split}
\end{equation}

After simplification (using relation (\ref*{log-stirling})) leads to

\begin{align}(-1)^\ell \gamma _{\ell} (a)&= \frac{\ln ^{\ell +1} (a-1)}{\ell +1}(-1)^{\ell+1}+ \sum _{j =2}^{\infty}  \frac{( -1)^{j }}{j !} \sum _{k =0}^{\ell}s (j ,k +1) \frac{\ell !}{(\ell -k) !} ( -1)^{k} \zeta ^{(\ell -k ,0)} (j ,a).\nonumber
\end{align} 
\vskip 2cm

\subsection{$n \in \mathbb{N},m=0, \Re(a)\geq 1$}

After simplification, we get

\begin{align}(-1)^\ell \gamma _{\ell} (a)&=& \nonumber\sum _{j =2}^{n}  \frac{1}{j !} \sum _{k =0}^{\ell}s (j ,k +1) \frac{\ell !}{(\ell -k) !} ( -1)^{k} \zeta ^{(\ell -k ,0)} (j ,a)+ \frac{\ln ^{\ell +1} a}{\ell +1}(-1)^{\ell+1}+{ R}_{\ell ,0 ,n ,a }.
\end{align} 

If $n$ tends to infinity, it arises (see Theorem \ref{teo3})
\begin{eqnarray}\label{gamma-n-infty}(-1)^\ell \gamma _{\ell} (a)&=& \nonumber\sum _{j =2}^{\infty}  \frac{1}{j !} \sum _{k =0}^{\ell}s (j ,k +1) \frac{\ell !}{(\ell -k) !} ( -1)^{k} \zeta ^{(\ell -k ,0)} (j ,a)+ \frac{\ln ^{\ell +1} a}{\ell +1}(-1)^{\ell+1}.
\end{eqnarray} 

\vskip 2cm

\section{Case $\Re(a)> 0,\left|a \right|\geq 1, m=\lambda n, m,n \rightarrow \infty$}
Now, we consider the case   $m=\lambda n, $ when $ n $ tends to infinity. 

Of course, the general expression of Theorem \ref{teo2} is true and we can derive the following series

\begin{align*}
 ( -1)^{\ell} \gamma _{\ell} (a)& =  
 -\frac{\ln ^{\ell +1} a}{\ell +1} ( -1)^{\ell} +\frac{\lambda }{\lambda  +1} \frac{\ln ^{\ell} a}{a} ( -1)^{\ell} \\&-\sum _{j =2}^{\infty }\frac{\left (\lambda \right )^{j} -( -1)^{j}}{\left (\lambda  +1\right )^{j}} \frac{( -1)^{j +1}}{j !} \sum _{k =0}^{\ell}s (j ,k +1) \frac{\ell !}{(\ell -k) !} ( -1)^{k} \zeta ^{(\ell -k ,0)} (j ,a)\\& +\sum _{k =0}^{\ell}( -1)^{\ell +1} \frac{\ell !}{(\ell -k) !} \ln ^{\ell -k} (a) \sum _{j =2}^{\infty }\left( \frac{ -\lambda /a}{\lambda  +1}\right) ^{j} \frac{1}{j !} s (j ,k +1).
 \end{align*}
 
Using the relation (\ref{log-stirling}) and for  $\left|  \frac{ \lambda /a}{\lambda  +1}\right|   <1  $, it can be simplified as 

\begin{equation}\label{eqlambda}
( -1)^{\ell} \gamma _{\ell} (a)=( -1)^{\ell +1} \frac{\ln ^{\ell +1} \left (a -\frac{\lambda }{\lambda  +1}\right )}{\ell +1} -{\displaystyle\sum _{k =0}^{\ell}}\frac{\ell ! ( -1)^{k}}{(\ell -k) !} \left [{\displaystyle\sum _{j =2}^{\infty }}\frac{\left (\lambda \right )^{j} -( -1)^{j}}{\left (\lambda  +1\right )^{j}} \frac{( -1)^{j +1}}{j !} s (j ,k +1) \zeta ^{(\ell -k ,0)} (j ,a)\right ] .
\end{equation}

One may consider (\ref{eqlambda})
as
an identity between two holomorphic functions in the variable $\lambda$. 
Since this formula is true for $\lambda \in \mathbb{Q}$ such that  $\left|  \frac{ \lambda }{\lambda  +1}\right|   <\left| a\right|   $, it is
obviously true for every $\lambda \in \mathbb{C}, \left|  \frac{ \lambda }{\lambda  +1}\right|   <\left| a\right|  . $

Now, suppose that  $a$ is real greater than 1  and we choose $\lambda  =i$.

Equating real and imaginary parts of (\ref*{eqlambda}) we get
\begin{align*}
	( -1)^{\ell} \gamma _{\ell} (a)&=( -1)^{\ell +1} \Re \left(\frac{\ln ^{\ell +1} \left (a -1/2 -i/2\right )}{\ell +1}\right)\\
 &-{\sum _{k =0}^{\ell}}\frac{\ell ! ( -1)^{k}}{(\ell -k) !} \left [\begin{array}{c}{\sum _{q =1}^{\infty }}( -1)^{q} 2^{ -2 q} \frac{1}{(4 q +1) !} s (4 q +1 ,k +1) \zeta ^{(\ell -k ,0)} (4 q +1 ,a) \\
 -{\sum _{q =0}^{\infty }}( -1)^{q} 2^{ -2 q -1} \frac{1}{(4 q +3) !} s (4 q +3 ,k +1) \zeta ^{(\ell -k ,0)} (4 q +3 ,a)\end{array}\right ]
\end{align*}
and the following  identity between Hurwitz-$\zeta$ functions:
\begin{align}
	\sum_{k=0}^\ell \frac{l!	}{(l-k)!}(-1)^k \sum_{j \equiv 2(4)}^{\infty}(-1/4)^{\frac{j-2}{4}}\frac{(-1)^{j-1}}{j!}s(j,k+1)\zeta ^{(\ell -k ,0)}(j,a)=\Im\left(
	(-1)^{\ell+1}\frac{\ln(a-\frac{i+1}{2})}{\ell+1}^{\ell+1}
	\right),
\end{align}
or more simply
\begin{align}
	\sum_{k=0}^\ell \frac{l!	}{(l-k)!}(-1)^k \sum_{r=0}^{\infty}\frac{(-1/4)^{r}}{(4r+2)!}s(4r+2,k+1)\zeta ^{(\ell -k ,0)}(4r+2,a)=\Im\left(
	(-1)^{\ell}\frac{\ln(a-\frac{i+1}{2})}{\ell+1}^{\ell+1}
	\right).
\end{align}

\section{Particular case: $\ell=1$}
In this section, we consider the first Stieltjes constant $\gamma_1$.
We will show a new formula of this constant in terms of the first derivatives of the $\zeta$-functions.

\begin{prop}
\[\gamma_1=\sum_{j=1}^{\infty}\frac{\zeta'(2j+1)}{2j+1}.\]
\end{prop}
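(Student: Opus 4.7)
The plan is to compute $S := \sum_{j=1}^\infty \zeta'(2j+1)/(2j+1)$ directly by substituting the Dirichlet series for $\zeta'$, swapping the two sums, performing the inner sum over $j$ in closed form, and recognising the result via the defining limit of $\gamma_1$. Although Theorem \ref{teo2} specialised to $\ell=1$, $a=1$ yields formulas expressing $\gamma_1$ as a series over \emph{all} integers $j\geq 2$ that mix $\zeta'(j)$ with $H_{j-1}\zeta(j)$, the clean form in the Proposition isolates only $\zeta'$ at odd arguments, so a direct computation seems shorter than chasing down cancellations inside that framework.

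First I would use $\zeta'(s)=-\sum_{k\geq 2}\ln k/k^s$ (absolutely convergent for $\Re s>1$) to write
\[
S \;=\; -\sum_{j\geq 1}\sum_{k\geq 2}\frac{\ln k}{(2j+1)\,k^{2j+1}}
\]
and swap the two summations. To justify the exchange I would sum in $j$ first for each fixed $k\geq 2$:
\[
\sum_{j\geq 1}\frac{(1/k)^{2j+1}}{2j+1} \;=\; \tfrac{1}{2}\ln\tfrac{k+1}{k-1}-\tfrac{1}{k},
\]
which is $O(1/k^3)$ by the Taylor expansion $\mathrm{arctanh}(y)-y=y^3/3+y^5/5+\cdots$, so $\sum_k \ln k\cdot O(1/k^3)<\infty$ and Fubini applies, giving
\[
S \;=\; -\sum_{k=2}^\infty \ln k\left[\tfrac{1}{2}\ln\tfrac{k+1}{k-1}-\tfrac{1}{k}\right].
\]

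Next I would evaluate this as a limit of partial sums. The key observation is a telescoping identity: expanding $\ln\frac{k+1}{k-1}=\ln(k+1)-\ln(k-1)$ and shifting the index in the second piece, all intermediate terms cancel and
\[
\sum_{k=2}^N \ln k\,\ln\tfrac{k+1}{k-1} \;=\; \ln N\,\ln(N+1).
\]
Combining this with the defining limit $\sum_{k=1}^N \ln k/k - (\ln N)^2/2 \to \gamma_1$, one finds
\[
S \;=\; \gamma_1 + \tfrac{1}{2}\lim_{N\to\infty}\ln N\,\bigl[\ln N-\ln(N+1)\bigr] \;=\; \gamma_1,
\]
since $\ln N-\ln(N+1)=\ln(1-1/(N+1))=O(1/N)$. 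The only genuinely delicate step is the Fubini exchange, controlled by the $O(1/k^3)$ bound above; the telescoping identity and the invocation of the definition of $\gamma_1$ are then routine.
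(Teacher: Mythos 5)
Your proof is correct, but it follows a genuinely different route from the paper's. The paper stays inside its own machinery: it specializes formula (\ref{gamma-n-infty}) (the $m=0$, $n\to\infty$ case of Theorem \ref{teo2}) at $a=1$ and again at $a=2$, obtaining two series for $\gamma_1$ mixing $\zeta'(j)$ with $H_{j-1}\zeta(j)$ and $H_{j-1}\zeta(j,2)$; adding them kills the even-$j$ derivative terms, and the leftover harmonic-number sum $\sum_{j\ge 2}\frac{H_{j-1}}{j}\bigl(\zeta(j,2)+(-1)^{j-1}\zeta(j)\bigr)$ is shown to vanish by writing $H_{j-1}$ as an integral, summing via the $\ln\Gamma$ series, and exploiting the symmetry of the resulting integrand about $t=1/2$. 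You instead bypass Theorem \ref{teo2} entirely: expanding $\zeta'(2j+1)$ as a Dirichlet series, your double sum has terms of a single sign so Tonelli gives the interchange, the inner sum is $\operatorname{arctanh}(1/k)-1/k=O(k^{-3})$, the product $\ln k\,\ln\frac{k+1}{k-1}$ telescopes to $\ln N\ln(N+1)$, and the limit definition $\gamma_1=\lim_N\bigl(\sum_{k\le N}\frac{\ln k}{k}-\frac{1}{2}\ln^2 N\bigr)$ finishes the computation — I checked each of these steps and they are all sound, including the handling of the two individually divergent pieces through partial sums. Your argument is more elementary and self-contained, and arguably cleaner as a proof of this one identity; what it does not do is illustrate how the identity falls out of the general expansion (\ref{gammal(a)}), which is the point the paper is making by deriving it this way. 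It would be worth noting that your method does not obviously extend to $\gamma_\ell$ for $\ell\ge 2$ or to general $a$, whereas the paper's framework does.
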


\begin{proof}

	We use the formula (\ref{gamma-n-infty}) with $a=1$, $l=1$ and $\lambda=0$.
\begin{equation}  \gamma _{1} = \sum _{j =2}^{\infty}  \frac{1}{j !}(s(j,1)\zeta'(j)-s(j,2)\zeta(j))
\end{equation} 
  The formula (\ref{gamma-n-infty}) with $a=2$, $l=1$ and $\lambda\rightarrow\infty$.
\begin{align}  \gamma_1(2)=\gamma _{1} &= \sum _{j =2}^{\infty}  \frac{1}{j !}(-1)^{j-1}(s(j,1)\zeta'(j,2)-s(j,2)\zeta(j,2))\\&= \sum _{j =2}^{\infty}  \frac{1}{j !}(-1)^{j-1}(s(j,1)\zeta'(j)-s(j,2)\zeta(j,2))
\end{align} 
Using the expression of the first Stirling numbers (\ref{sn1}, \ref*{sn2}), the two expression of $\gamma_1$ become

\begin{equation}  \gamma _{1} = \sum _{j =2}^{\infty}  \frac{(-1)^{j-1}}{j }(\zeta'(j)+
	H_{j-1}\zeta(j))
\end{equation} 

\begin{equation} \gamma _{1} = \sum _{j =2}^{\infty}  \frac{1}{j }(\zeta'(j)+H_{j-1}\zeta(j,2)) 
\end{equation} 
In the sequel, we will  show that 
\[\sum _{j =2}^{\infty}\frac{H_{j-1}}{j}  \left(  \zeta(j,2)+(-1)^{j-1}
\zeta(j)\right) =0\] 

and the proposition will be  proved.

\[\sum _{j =2}^{\infty}\frac{H_{j-1}}{j}  \left(  \zeta(j,2)+(-1)^{j-1}
\zeta(j)\right)  =\int _{0}^{1}\sum _{j =2}^{\infty}\frac{1}{j}  \left(  \zeta(j,2)+(-1)^{j-1}
\zeta(j)\right) \frac{1 -t^{j -1}}{1 -t} d t.\]

The permutation  is valid since the convergence of the series is uniform on $[0,1]$.

 Using the formula (https://dlmf.nist.gov/5.7.E3)

\begin{equation*}
\ln\Gamma\left(1+z\right)=-\ln\left(1+z\right)+z(1-\gamma)+\sum_{k=2}^{\infty}
(-1)^{k}\zeta\left(k,2\right)\frac{z^{k}}{k}, \;\;\;\left| z\right|<2,
\end{equation*}

it is easy to prove that
\begin{align*}
\sum _{j \geq 2}\frac{1}{j} (\zeta  (j ,2) +( -1)^{j-1} \zeta  (j)) \frac{1 -t^{j -1}}{1 -t}& = \sum _{j \geq 2}\frac{1}{j} \zeta  (j ,2) \frac{1 -t^{j -1}}{1 -t}+\sum _{j \geq 2}\frac{ ( -1)^{j-1}}{j} \zeta  (j) \frac{1 -t^{j -1}}{1 -t}\\&=-\frac{\ln \Gamma(2-t)}{t(1-t)}+\frac{\ln \Gamma(1+t)}{t(1-t)}=:\phi(t).
\end{align*}
This function $\phi$ is symetric with respect to the abscissa $t=1/2$, so its integral between $0$ and $1$ is zero and the proposition is proved. 
\end{proof}

\end{document}